\newcommand{\comments}[1]{}
\renewcommand{\bar}{\overline}
\renewcommand{\geq}{\geqslant}
\newcommand{\w}{\omega}
\newcommand{\al}{\alpha}
\newcommand{\be}{\beta}
\newcommand{\ka}{\kappa}
\newcommand{\R}{\mathbb{R}}
\newcommand{\Q}{\mathbb{Q}}
\newcommand{\I}{\mathbb{I}}
\newcommand{\N}{\mathbb{N}}
\newcommand{\Sorg}{\mathbb{S}}
\newcommand{\B}{\mathcal{B}}
\newcommand{\U}{\mathcal{U}}
\newcommand{\V}{\mathcal{V}}
\newcommand{\Lin}{Lindel\"of }
\newcommand{\Lind}{Lindel\"of}
\newcommand{\n}{\frac{1}{n}}
\newtheorem{thm} {Theorem}[section]
\newtheorem{propn}[thm]{Proposition}
\newtheorem{lem} [thm]{Lemma}
\theoremstyle{definition}
\newtheorem{defn}[thm]{Definition}
\newtheorem{xmpl}[thm]{Example}
\newtheorem*{construction}{Construction}
\newtheorem*{oq}{Open Question}
\newtheorem*{ackn}{Acknowledgements}
\theoremstyle{remark}
\newtheorem{nt}[thm]{Note}
\title{A Note on Quasi-\Lin spaces }
\author{Petra Staynova\\ Oxford University, Pembroke College}
\date{}
\begin{document}
\maketitle

\begin{abstract}
The quasi-\Lin property was first introduced by Arhangelski in \cite{Arc}, as a strengthening of the weakly \Lin property. 
However, unlike \Lin and weakly \Lin spaces, very little is known about how quasi-\Lin spaces behave under the main topological operations, and how the property relates to separation axioms. 
In the present paper, we look at several properties of quasi-\Lin spaces. 
We  consider several examples: a weakly \Lin space which is not quasi-\Lind, a product of \Lin spaces which is not even  quasi-\Lind, and a quasi-\Lin space which is not ccc. 
At the end, we pose some open questions.
\end{abstract}

\begin{nt}
All spaces are assumed Hausdorff.
\end{nt}

\section{Introduction}
It is well known that any product of compact spaces is compact, and that the product of even two \Lin spaces need not be \Lin \cite{ENG}.
Various generalisations of \Lin spaces have been considered throughout years and attempts to compare their behaviour under basic topological operations with the behaviour of \Lin spaces have been made.
One such generalisation - the notion of a weakly \Lin space - was introduced by Z. Frolik \cite{Fro}. 

\begin{defn}
A topological space $X$ is called \emph{weakly \Lind} if for any open cover $\U$ of $X$, one can find a countable subfamily  $\U'\subseteq\U$ such that $X=\bar{\bigcup \U'}$.
\end{defn}

Unfortunately, unlike compactness and \Lind ness, that property is not inherited by closed subspaces. 
 Thus A. Arhangelski \cite{Arc} considered a stronger property:
\begin{defn}
A topological space $X$ is called \emph{quasi-\Lin} if for any closed subset $C\subseteq X$ and any family $\U$ of open in $X$ sets which cover $C$, a countable subfamily $\U'\subseteq\U$ can be found such that $C\subset\bar{\bigcup\U'}$.
\end{defn}

In fact, Arhangelski defined a more general invariant - the \emph{quasi-\Lind} number $\textrm{qL}(X)$ of a given topological space $X$:
\begin{defn}
$\textrm{qL}(X)=\w .\min\{\tau: \forall\textrm { closed } C\subset X , \forall \U \subset\tau_X \textrm{ with } C\subseteq \bigcup\U, \exists \textrm{ a countable } 
\linebreak \U'\subseteq\U \textrm{ such that } C\subset\bar{\bigcup\U'}\}$.
\end{defn}

He used this to generalise a theorem of Bell, Ginsburgh and Woods \cite{BGW} for obtaining an upper bound of the cardinality of a topological space.

It follows directly from the definition that any \Lin space is quasi-\Lin and any quasi-\Lin space is weakly \Lind. 
The uncountable discrete subspace has neither of the above properties.
We use the following proposition to give a non-trivial example of a quasi-\Lin space which is not \Lind.

\begin{propn}
Every separable topological space $X$ is quasi-\Lind.
\end{propn}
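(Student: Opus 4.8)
The plan is to build the required countable subfamily out of a fixed countable dense subset of $X$. So fix $D \subseteq X$ with $\bar D = X$, let $C \subseteq X$ be closed, and let $\U$ be a family of open sets with $C \subseteq \bigcup \U$. Put $G = \bigcup \U$, an open set containing $C$. For each $d \in D \intersect G$ choose some $U_d \in \U$ with $d \in U_d$ (by countable choice, or simply by well-ordering $\U$ and taking the least such member), and let $\U' = \{\, U_d : d \in D \intersect G \,\}$. Then $\U' \subseteq \U$ is countable, being indexed by a subset of $D$, and it remains to verify that $C \subseteq \bar{\bigcup \U'}$.

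The one fact doing the work is the elementary observation that $\bar{G \intersect D} = \bar G$ whenever $G$ is open and $D$ is dense: the inclusion $\subseteq$ is clear, and for $\supseteq$, if $x \in \bar G$ and $W$ is any open neighbourhood of $x$ then $W \intersect G$ is nonempty and open, hence meets $D$, so $W$ meets $G \intersect D$, whence $x \in \bar{G \intersect D}$. Applying this and using $D \intersect G \subseteq \bigcup \U' \subseteq G$, we obtain
\[ \bar G \;=\; \bar{D \intersect G} \;\subseteq\; \bar{\bigcup \U'} \;\subseteq\; \bar G, \]
so $\bar{\bigcup \U'} = \bar G \supseteq G \supseteq C$. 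Thus $\U'$ is a countable subfamily of $\U$ with $C \subseteq \bar{\bigcup \U'}$, and since $C$ and $\U$ were arbitrary, $X$ is quasi-\Lind{}.

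I do not expect any genuine obstacle here; the argument is in essence just the remark that a countable dense set ``already sees'' every open set, so the countably many members of $\U$ that meet $D$ have the same closure-of-union as $\U$ itself. The only minor points needing a word of care are that $\U'$ really is countable (it is) and the degenerate case $D \intersect G = \emptyset$, which forces $G = \emptyset$, hence $C = \emptyset$, and is handled consistently by taking $\U' = \emptyset$. It is worth noting in passing that the proof actually delivers the slightly sharper equality $\bar{\bigcup \U'} = \bar{\bigcup \U}$.
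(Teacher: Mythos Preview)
Your proof is correct and follows essentially the same approach as the paper's own proof: intersect the countable dense set with $\bigcup\U$, pick one member of $\U$ through each such point, and use the elementary fact that $\bar{G\cap D}=\bar G$ for open $G$ and dense $D$. The paper phrases this last step slightly differently (showing $V=\bigcup\U\setminus\bar{A_1}$ is open and misses the dense set, hence is empty), but the argument is the same in substance, and you also obtain the same sharper conclusion $\bar{\bigcup\U'}=\bar{\bigcup\U}$.
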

\begin{proof}
Indeed, let $C\subset X$ be closed and let $\U$ be a family of open subsets with $C\subset\bigcup\U$. 
Take a countable dense subset $A \subset X$ and let $A_1=A\cap(\bigcup\U)=\{a_1,\ldots,a_n,\ldots\}$.
For every $n$ choose $U_n\in\U$ such that $a_n\in U_n$.
 Consider $V=\bigcup\U \setminus\bar {A_1}$ and note that $V$ is open and $V \cap A = \emptyset$. 
Hence, $V$ must be the empty set. 
Therefore,  $\bigcup\U \subset \bar{A_1}$.
Furthermore, $A_1 \subset \bigcup_{n\in\N}U_n \subset \bigcup\U$. 
So, we get  $\bar{\bigcup\U}=\bar{A_1}$
 and, moreover,
    $C \subset \bar{\bigcup\U} = \bar{\bigcup_{n\in\N}U_n}$. 
Therefore, $C$ is weakly \Lind, as required. 
\end{proof}

Using this proposition, we can give the following two examples of quasi-\Lin spaces which are not \Lind:

\begin{xmpl}
The Sorgenfrey plane $\Sorg\times\Sorg$ is not \Lind, but is separable and hence quasi-\Lind.
\end{xmpl}

\begin{xmpl}
The Niemytzki plane $L$ is separable, hence quasi-\Lind, but not \Lind.
\end{xmpl}

The quasi-\Lin and weakly \Lin properties coincide in the case of normal spaces (see \cite{PS} for the proof). 
We use ideas from Mysior (\cite{M}) and modify a construction from \cite{SZ} in order to obtain the following example:

\begin{xmpl} \label{0ex1}  There exists a 
weakly Lindel\"of space $X$ which is not quasi-Lindel\"of (and not even \Lind).
\end{xmpl}

\begin{construction}
Let $A=\{(a_\al, -1):\al<\w_1\}$ be an $\w_1$-long sequence in the set $\{(x,-1):x\geq 0\}\subseteq\R^2$.
Let $Y=\{(a_\al, n):\al<\w_1,n\in\w\}$. 
Let $a=(-1,-1)$.
Finally let $X=Y\cup A\cup\{a\}$.

We topologize $X$ as follows:
\begin{itemize}
\item[-]
all points in $Y$ are isolated;
\item[-]
for $\al<\w_1$ the basic neighborhoods of $(a_\al,-1)$ will be of the form 
\begin{displaymath}
U_n(a_\al,-1)=\{(a_\al,-1)\}\cup\{(a_\al, m):m\geq n\} \textrm{ for } n\in\w
\end{displaymath}

\item[-]
the basic neighborhoods of $a=(-1,-1)$ are of the form
\begin{displaymath}
U_\al (a)=\{a\}\cup\{(a_\be , n):\be>\al, n\in\w\} \textrm{ for } \al<\w_1.
\end{displaymath}
\end{itemize}
%
%
%
%
Let us point out that $A$ is closed and discrete in this topology. 
Indeed, 
for any point $x\in X$ there is a basic neighborhood $U(x)$ such that $A\cap U(x)$ contains at most one point and also that $X\setminus A=\{a\}\cup Y$ is open (because $U_\al(a)\subset Y\cup\{a\}$.
Hence $X$ contains an uncontable closed discrete subset and therefore  it cannot be \Lind. 

Note that for any open $U\ni a$ the  set $X\setminus\bar{U}$ is at most countable. 
Indeed, for any $\al<\w_1$, $\bar{U_\al(a)}=U_\al(a)\cup\{(a_\be, -1):\be>\al\}$. 
Hence $X\setminus \bar{U_\al(a)}$ is at most countable.

It is easily seen that $X$ is Hausdorff. 
Without much effort, it can also be proven that $X$ is  Urysohn.

Let us now prove that $X$ is weakly \Lind.
Let $\U$ be an open cover of $X$. 
Then there exists a $U(a)\in \U$ such that $a\in U(a)$.
We can find a basic neighborhood $U_\be(a)\subset U(a)$.
Then  $\bar{U_\be(a)}\subset \bar{U(a)}$ and hence $X\setminus \bar{U(a)}$ will also be at most  countable.
Hence $X\setminus \bar{U(a)}$ can be covered by (at most) countably many elements of $\U$, say $\U^{*}$. Set $\U'=U^{*}\cup\{U(a)\}$.
Then, $X\subseteq\bigcup_{U\in\U'} \bar{U}\subseteq\bar{\bigcup_{U\in\U'}U}$.
Therefore, $X$ is weakly \Lind.

Now, let us prove that $X$ is not quasi-\Lind.
Consider the 1-neighborhood of $a$: $U_1 (a)=\{a\}\cup\{(a_\be , n):\be>1, n\in\w\} $. 
We have that $C=X\setminus U_1(a)$ is closed. 
We show the uncountable family of basic open sets $\U=\{U_0(a_\al,-1):\al<\w_1\}$ forms an open cover of $C$ which has no countable subcover with dense union. 
Note that the sets $U_0(a_\al,-1)$ are closed and open. 
Indeed, $X\setminus U_0(a_\al,-1)=\bigcup\{U_0(a_\be,-1):\be\neq\al\}\cup  U_{\al+1}(a)$. 
Hence, if we remove even one of the $U_0(a_\al,-1)$, the point $(a_\al,-1)$ would remain uncovered. 
Therefore, $X$ is not quasi-\Lind.

\comments{
$X$ is Urysohn because 
\begin{displaymath}
\bar{U_\al(a)}=U_\al(a)\cup\{(a_\be,-1):\be>\al\},
\end{displaymath}
\begin{displaymath}
\bar{U_n(a_\al,-1)}=U_n(a_\al,-1)\cup\{a\} \textrm{, and }
\end{displaymath}
\begin{displaymath}
\bar{U(a_\al,n)}=U(a_\al,n)=\{(a_\al, n)\}.
\end{displaymath}
}
 
\end{construction}

\Lind ness is equivalent to requiring that every cover of basic open sets has a countable subcover. 
\comments{We define a similar property for quasi-\Lin spaces:

\begin{defn}
A space $X$ is called \emph{base-quasi-\Lin}if for every closed $C\subseteq X$ and every basic open family $\U$ of subsets of $X$ there is a countable subfamily $\U'\subset\U$ such that $C\subset\bar{\bigcup\U'}$.
\end{defn}

This definition does not depend on the choice of the base $\B$ of $X$. 
Indeed, let $X$ be $\B$-quasi-\Lin and $\B'$ be any other base. 
Let $C\subseteq X$ be closed and let $C \subset\bigcup\U$, $\U\subset\B'$. 
For every $U\in\U$ there exists $\V_U\subset\B$ such that $U=\bigcup\V_U$. 
Hence $C\subset\bigcup_{U\in\U}(\bigcup\V_U)$ is a family of $\B$-open sets covering $C$. 
Since $X$ is $\B$-quasi-\Lin there is a countable subfamily 
\begin{displaymath}
\V'=\{V_n:n\in\N\}
\subseteq\bigcup_{U\in\U}\left(\bigcup\V_U\right)=
\bigcup\U
\end{displaymath}
such that $C\subset\bar{\bigcup\V'}$. 
For every $n\in\N$ let $U_n\in\U$ be such that $V_n\subset U_n$ and let $\U'=\{U_n:n\in\N\}$. 
Then $\U'$ is the required subfamily of $\U$ since $C\subset\bar{\bigcup\V'}\subset\bar{\bigcup\U'}$. 
Hence $X$ is $\B'$-quasi-\Lind.

In \Lin spaces, base-\Lin coincides with \Lind. 
} 
We have a similar result here:

\begin{propn}
Let $X$ be a topological space. The following are equivalent:
\begin{enumerate}
\item $X$ is quasi-\Lind.
\item Let $\B$ be a fixed base for $X$. Then for any closed subset $C\subset X$
and any cover $\U$ of $C$ with $\U \subset \B$ there is a countable subfamily $\U'$ 
of $\U$ such that $C\subset\bar{\bigcup\U'}$.
\end{enumerate}
\end{propn}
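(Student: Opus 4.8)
The plan is the obvious one: the implication (1) $\Rightarrow$ (2) is immediate, and (2) $\Rightarrow$ (1) is the ``test on a base'' argument familiar from the characterisation of \Lind ness recalled just above, adapted to closed sets and to closures of unions.

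For (1) $\Rightarrow$ (2), I would simply observe that every member of $\B$ is open, so a cover $\U\subset\B$ of a closed set $C$ is in particular a family of open sets covering $C$; quasi-\Lind ness of $X$ then supplies the required countable $\U'\subseteq\U$ with $C\subset\bar{\bigcup\U'}$.

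For (2) $\Rightarrow$ (1), let $C\subset X$ be closed and let $\U$ be an arbitrary family of open sets with $C\subset\bigcup\U$. First I would replace $\U$ by the basic refinement $\V=\{B\in\B: B\subseteq U\text{ for some }U\in\U\}$. Because $\B$ is a base, every point of $\bigcup\U$, and hence every point of $C$, lies in some $B\in\V$; thus $\V\subseteq\B$ covers $C$. Applying (2) to $C$ and $\V$ yields a countable $\V'=\{V_n:n\in\N\}\subseteq\V$ with $C\subset\bar{\bigcup\V'}$. For each $n$ I would choose $U_n\in\U$ with $V_n\subseteq U_n$ and set $\U'=\{U_n:n\in\N\}$, a countable subfamily of $\U$. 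Then $\bigcup\V'\subseteq\bigcup\U'$, so $C\subset\bar{\bigcup\V'}\subseteq\bar{\bigcup\U'}$ by monotonicity of the closure operator, which is exactly what quasi-\Lind ness of $X$ requires.

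I do not anticipate a genuine obstacle here. The only point deserving a line of care is bookkeeping: the auxiliary family $\V$ is generally \emph{not} a subfamily of $\U$, and it may cover more of $X$ than $C$ does, but neither matters, since $\V$ is only required to cover $C$ and the final step passes from $\V'$ back to the honest subfamily $\U'$ of $\U$ using $\bigcup\V'\subseteq\bigcup\U'$ together with monotonicity of closure. An alternative of the same length is to pick, for each $x\in C$, some $U_x\in\U$ with $x\in U_x$ and then $B_x\in\B$ with $x\in B_x\subseteq U_x$, apply (2) to the basic cover $\{B_x:x\in C\}$ of $C$, and conclude identically.
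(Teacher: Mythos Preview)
Your proof is correct and is essentially the same as the paper's: the paper also notes that (1)$\Rightarrow$(2) is trivial and, for (2)$\Rightarrow$(1), replaces an arbitrary open cover $\U$ of a closed $C$ by a basic refinement (writing each $U\in\U$ as a union of basic sets), applies (2) to extract a countable $\V'=\{V_n\}$, and then chooses $U_n\in\U$ with $V_n\subseteq U_n$ to get $C\subset\bar{\bigcup\V'}\subset\bar{\bigcup\U'}$. The only cosmetic difference is that the paper indexes the refinement by the sets $U\in\U$ rather than taking $\{B\in\B:B\subseteq U\text{ for some }U\in\U\}$ as you do, which comes to the same thing.
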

\begin{proof}
The forward direction is trivial.

For the converse, let $C\subset X$ be closed and let $\U$ be a family of open subsets of $X$ covering $C$, i.e. $C\subset\bigcup\U$. 
Let $\B$ be any base for the topology of $X$. 
For every $U\in\U$ there is a family $\V_U\subset\B$ such that $U=\bigcup\V_U$. 
Then $C\subset\bigcup\U=\bigcup_{U\in\U}(\bigcup\V_U)$. 
Since $X$ is base quasi-\Lind, then there exists a countable 
\begin{displaymath}
\V'=\{V_n:n\in\N\}\subset
\bigcup_{U\in\U}\left(\bigcup\V_n\right)=
\bigcup\U
\end{displaymath}
such that $C\subset\bar{\bigcup\V'}$. 
As above, choose a countable $\U'\subseteq\U$ such that $\bigcup\V'\subset\bigcup\U'$.
Then $C\subset\bar{\bigcup\U'}$ and hence $X$ is quasi-\Lind.
\end{proof}

Note that this is independent of choice of basis, since if $\B_1$ and $\B_2$ are two bases then  the respective (2)-conditions are both equivalent to (1), and hence also equivalent to each other.

The following Theorem is proved in  \cite{PS}:

\begin{thm}
If $X$ satisfies the countable chain condition (i.e. is ccc), then $X$ is quasi-\Lind.
\end{thm}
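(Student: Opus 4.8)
The plan is to adapt the classical argument that a ccc space is weakly \Lind{} to the closed-set formulation of quasi-\Lind ness. Fix a closed set $C\subseteq X$ and a family $\U$ of open subsets of $X$ with $C\subseteq\bigcup\U$; in fact the closedness of $C$ is never used, so the argument will show more generally that in a ccc space every subset admits a countable subfamily of any cover with dense union. The key device is to pass from $\U$ to the auxiliary family
\[
\mathcal{P}=\{W\subseteq X:\ W\text{ is nonempty open and }W\subseteq U\text{ for some }U\in\U\},
\]
since $\U$ itself need not contain a useful cellular subfamily (it might, e.g., be $\{X\}$).

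Next I would use Zorn's Lemma to pick $\W\subseteq\mathcal{P}$ maximal with respect to being pairwise disjoint. Because $X$ is ccc, $\W$ is at most countable; write $\W=\{W_n:n\in\N\}$ and choose, for each $n$, some $U_n\in\U$ with $W_n\subseteq U_n$. The candidate countable subfamily is $\U'=\{U_n:n\in\N\}$.

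The heart of the proof is the density claim $\bigcup\U\subseteq\bar{\bigcup\W}$. To establish it I would argue by contradiction: if some $x\in\bigcup\U$ is not in $\bar{\bigcup\W}$, choose an open $V\ni x$ with $V\intersect\bigcup\W=\emptyset$ and a $U\in\U$ with $x\in U$; then $V\intersect U$ is nonempty, open, contained in the member $U$ of $\U$, hence lies in $\mathcal{P}$, and it meets no member of $\W$, so adjoining it to $\W$ contradicts maximality. Granting the claim, $W_n\subseteq U_n$ for every $n$ yields
\[
C\subseteq\bigcup\U\subseteq\bar{\bigcup_{n\in\N}W_n}\subseteq\bar{\bigcup_{n\in\N}U_n}=\bar{\bigcup\U'},
\]
which is exactly what quasi-\Lind ness demands.

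The only real obstacle is the one already noted: a maximal cellular family cannot in general be found inside $\U$, so one must work with open subsets of members of $\U$ and then spend one bookkeeping step selecting the $U_n\supseteq W_n$. Beyond that the argument is a routine combination of the ccc hypothesis with Zorn's Lemma; notably no separation axiom enters, which fits with the fact that ccc does not imply separability and hence this cannot simply be deduced from the earlier Proposition on separable spaces.
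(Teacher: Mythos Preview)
Your argument is correct: passing to the auxiliary family $\mathcal{P}$ of nonempty open sets contained in members of $\U$, extracting a maximal cellular subfamily $\W\subseteq\mathcal{P}$ via Zorn, using ccc to get $|\W|\leq\aleph_0$, and then showing $\bigcup\U\subseteq\bar{\bigcup\W}$ by maximality --- all of this goes through exactly as you wrote it, and the final chain of inclusions delivers $C\subseteq\bar{\bigcup\U'}$. The bookkeeping point you flag (that one must refine to subsets of members of $\U$ rather than work in $\U$ itself) is indeed the only place one could slip.

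As for comparison with the paper: there is nothing to compare against. The paper does not supply a proof of this theorem at all; it simply attributes the result to the reference \cite{PS} and moves on. Your argument is the standard one and would serve perfectly well as the missing proof.
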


This shows that the ccc property implies the quasi-\Lin property, which in turn implies that the space is weakly-\Lind.

The converse, however, does not hold, as the following example shows.

\begin{xmpl}[\cite{SS}]
The lexicographic square is quasi-\Lin (in fact, it is compact), but not ccc.
\end{xmpl}

As we pointed out in the beginning, products of compact spaces is compact, and a product of two \Lin spaces might not be \Lind. 
Such products might not even be weakly \Lind, as the following example from \cite{HJ} shows:

\begin{xmpl}
There is a topological space $X$ that is not weakly \Lin (and hence not quasi-\Lind) that is a product of two \Lin spaces.
\end{xmpl}

Hence neither the weakly \Lin nor the quasi-\Lin property is productive, i.e. both spaces have the same behaviour with respect to products as the \Lin property. 
For weakly \Lin spaces, we  have the following result:

\begin{propn}
If $X$ is weakly \Lin and $Y$ is compact, then $X\times Y$ is weakly \Lind.
\end{propn}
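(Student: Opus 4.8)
The plan is to use the "tube lemma" machinery that underlies the classical proof that a product with a compact factor behaves well with respect to covering properties. Let $\U$ be an open cover of $X \times Y$. The strategy is to first pass to a basic open refinement (by the Proposition above, weakly \Lin covers with dense union can be tested on basic open sets, but here it is cleaner to work directly): for each $x \in X$, the slice $\{x\} \times Y$ is homeomorphic to $Y$, hence compact, so it is covered by finitely many members $U_1^x, \dots, U_{k_x}^x$ of $\U$. By the tube lemma, there is an open neighbourhood $W_x$ of $x$ in $X$ such that $W_x \times Y \subseteq \bigcup_{i=1}^{k_x} U_i^x$.

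Next I would apply the weakly \Lin property of $X$ to the open cover $\{W_x : x \in X\}$: there is a countable subset $\{x_n : n \in \N\}$ such that $X = \overline{\bigcup_{n \in \N} W_{x_n}}$. The candidate countable subfamily of $\U$ is then $\U' = \{U_i^{x_n} : n \in \N,\ 1 \leq i \leq k_{x_n}\}$, which is countable as a countable union of finite sets. It remains to check that $X \times Y \subseteq \overline{\bigcup \U'}$.

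For this last step, observe that $\bigcup \U' \supseteq \bigcup_{n} \bigcup_{i=1}^{k_{x_n}} U_i^{x_n} \supseteq \bigcup_n (W_{x_n} \times Y) = \left(\bigcup_n W_{x_n}\right) \times Y$. Writing $W = \bigcup_n W_{x_n}$, we have $\overline{W} = X$, and I claim $\overline{W \times Y} = \overline{W} \times Y = X \times Y$; this uses compactness of $Y$ (or more simply, that closure commutes with finite products in the relevant direction: $\overline{W} \times \overline{Y} = \overline{W \times Y}$ always, and here $\overline Y = Y$). Hence $X \times Y = \overline{W \times Y} \subseteq \overline{\bigcup \U'}$, so $\U'$ witnesses that $X \times Y$ is weakly \Lind.

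The main obstacle is really just making sure each piece is invoked cleanly: the tube lemma must be stated in the form "for compact $Y$ and open $G \supseteq \{x\} \times Y$ there is open $W \ni x$ with $W \times Y \subseteq G$", and one must be slightly careful that the weakly \Lin property is applied to the cover $\{W_x\}$ of $X$ rather than to anything involving $Y$. Beyond that, the identity $\overline{A \times B} = \overline{A} \times \overline{B}$ for product spaces is standard and requires no compactness, so the proof is short; the role of compactness of $Y$ is solely to produce the finite subcovers of the slices and hence the tubes.
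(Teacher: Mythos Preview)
Your argument is correct: the tube lemma reduces the problem to an open cover $\{W_x\}$ of $X$, the weakly \Lin property of $X$ gives a countable subfamily with dense union $W$, and then the identity $\overline{W\times Y}=\overline{W}\times\overline{Y}=X\times Y$ finishes it. Note that the paper does not actually supply a proof of this proposition; it simply refers the reader to \cite{PS}, so there is no in-paper argument to compare against. Your approach is the standard one and is exactly what one would expect the cited proof to look like.
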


A proof of this can be found in \cite{PS}. 
It is natural to ask whether this can be extended to quasi-\Lin spaces, namely: 
\begin{oq}
Is the product of a quasi-\Lin space $X$ and a compact space $Y$ quasi-\Lind? 
\end{oq}
This question is interesting even in the partical case:
\begin{oq}
Is the product of the unit interval $[0,1]$ with a quasi-\Lin space, quasi-\Lind?
\end{oq}

The following proposition is  a very special case of the first question:

\begin{propn}
If $A=\{0, 1, 2,\ldots, n\}$ is a finite discrete set and $Y$ is a quasi-\Lin space, then the product $A\times Y$ is quasi-\Lind.
\end{propn}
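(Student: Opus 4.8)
The plan is to use that, since $A$ is discrete, $A\times Y$ splits as the finite topological sum $\bigsqcup_{i=0}^{n}(\{i\}\times Y)$ of clopen copies of $Y$; one then solves the quasi-\Lin problem on each summand separately, using that $Y$ is quasi-\Lind, and glues the finitely many resulting countable families together.

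First I would fix a closed set $C\subseteq A\times Y$ and a family $\U$ of open subsets of $A\times Y$ with $C\subseteq\bigcup\U$. For each $i\in A$ put $C_i=C\cap(\{i\}\times Y)$. Since $\{i\}\times Y$ is closed in $A\times Y$, each $C_i$ is closed, and the relatively open sets $\{\,U\cap(\{i\}\times Y):U\in\U\,\}$ cover $C_i$ (because $C_i\subseteq\bigcup\U$ and $C_i\subseteq\{i\}\times Y$). The subspace $\{i\}\times Y$ is homeomorphic to $Y$, hence quasi-\Lind, so there is a countable subfamily $\V_i\subseteq\{\,U\cap(\{i\}\times Y):U\in\U\,\}$ with $C_i\subseteq\overline{\bigcup\V_i}$, the closure taken in $\{i\}\times Y$. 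For each $V\in\V_i$ choose some $U_V\in\U$ with $V=U_V\cap(\{i\}\times Y)$, and set $\U_i=\{\,U_V:V\in\V_i\,\}\subseteq\U$, which is countable.

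The small but essential point is that, since $\{i\}\times Y$ is closed in $A\times Y$, the closure in $\{i\}\times Y$ of any subset of $\{i\}\times Y$ agrees with its closure in $A\times Y$. Hence $C_i\subseteq\overline{\bigcup\V_i}\subseteq\overline{\bigcup\U_i}$ with all closures now taken in $A\times Y$, the last inclusion because $\bigcup\V_i\subseteq\bigcup\U_i$. Put $\U'=\bigcup_{i=0}^{n}\U_i$; this is a finite union of countable families, hence countable, and
\[
C=\bigcup_{i=0}^{n}C_i\subseteq\bigcup_{i=0}^{n}\overline{\bigcup\U_i}\subseteq\overline{\bigcup_{i=0}^{n}\bigcup\U_i}=\overline{\bigcup\U'}.
\]
Thus $\U'$ is the required countable subfamily, and $A\times Y$ is quasi-\Lind.

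There is no genuine obstacle; the only points that need care are that each $C_i$ really is closed (so that quasi-\Lind ness of $Y$ may be applied) and that the two notions of closure on the clopen piece $\{i\}\times Y$ coincide, both of which are immediate. What the argument really establishes is that a finite topological sum of quasi-\Lin spaces is quasi-\Lind; note that finiteness of $A$ is used essentially, to keep $\U'$ countable, which is why the interval questions posed just above do not follow from it.
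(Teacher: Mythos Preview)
Your proof is correct and follows essentially the same approach as the paper: decompose $A\times Y$ into the clopen slices $\{i\}\times Y$, apply the quasi-\Lin property of $Y$ on each slice, and take the finite union of the resulting countable families. The only cosmetic difference is that the paper frames this as an induction on $n$ with the two-point case $\{0,1\}$ as the key lemma, whereas you handle all $n+1$ slices at once; the content is the same.
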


This can be proved by induction; the key step it to prove this for a two-point discrete set: 

\begin{lem}\label{refprop}
If $X$ is quasi-\Lin and $Y=\{0,1\}$, then $X\times Y$ is quasi-\Lind.
\end{lem}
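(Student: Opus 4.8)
The plan is to use the fact that, because $Y=\{0,1\}$ is discrete, the product $X\times Y$ is the disjoint topological sum of the two clopen sets $X_0=X\times\{0\}$ and $X_1=X\times\{1\}$, each homeomorphic to $X$; the quasi-\Lin property will then transfer because it behaves well under finite disjoint sums. Given a closed set $C\subseteq X\times Y$, I would put $C_i=C\cap X_i$ for $i\in\{0,1\}$. Since each $X_i$ is clopen, each $C_i$ is closed in $X\times Y$ and corresponds, under the homeomorphism $X_i\iso X$, to a closed subset of $X$; moreover $C=C_0\cup C_1$.

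Now let $\U$ be a family of open subsets of $X\times Y$ with $C\subseteq\bigcup\U$. For each $i$ and each $W\in\U$ the trace $W\cap X_i$ is open in $X_i$, and $\{\,W\cap X_i:W\in\U\,\}$ covers $C_i$. (One could instead first apply the basic-open-set proposition above, taking as base for $X\times Y$ the sets $U\times\{0\}$ and $U\times\{1\}$ with $U$ open in $X$, so that every member of $\U$ already sits inside one of the two summands; this is a matter of taste.) Transporting across $X_i\iso X$ and using that $X$ is quasi-\Lind, apply the hypothesis to the closed set $C_i$ and the open cover $\{\,W\cap X_i:W\in\U\,\}$: there is a countable subfamily $\U_i'\subseteq\U$ with $C_i\subseteq\overline{\bigcup_{W\in\U_i'}(W\cap X_i)}$, the closure being taken in $X_i$.

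The only step requiring a word of justification is the comparison of closures. Since $X_i$ is clopen in $X\times Y$, for any $S\subseteq X_i$ the closure of $S$ in the subspace $X_i$ agrees with its closure in $X\times Y$. Applying this with $S=\bigcup_{W\in\U_i'}(W\cap X_i)$, and noting $S\subseteq\bigcup\U_i'$, we get $C_i\subseteq\overline{\bigcup\U_i'}$ with the closure now computed in $X\times Y$. Then $\U'=\U_0'\cup\U_1'$ is a countable subfamily of $\U$ and $C=C_0\cup C_1\subseteq\overline{\bigcup\U'}$, which is exactly what is required. I do not expect a real obstacle here: the content lies entirely in the two observations that $X\times\{0,1\}$ splits as a finite disjoint sum of copies of $X$ and that closures inside a clopen piece coincide with closures in the whole space. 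The same splitting argument applies verbatim to a disjoint sum of two (possibly different) quasi-\Lin spaces, so the induction establishing the preceding proposition for the finite discrete set $A=\{0,1,\dots,n\}$ goes through by peeling off one clopen copy at a time.
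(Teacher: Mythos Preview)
Your argument is correct and follows essentially the same route as the paper: split $C$ into its two slices $C_0,C_1$ inside the clopen pieces $X\times\{0\}$ and $X\times\{1\}$, apply the quasi-\Lin property of $X$ to each slice, and combine the two countable subfamilies. The paper handles the identification $X\times\{i\}\iso X$ and the closure comparison by a parenthetical remark, whereas you spell these out explicitly (via the clopen-subspace observation); this extra care is welcome but does not amount to a genuinely different approach.
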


\begin{proof}
Let $C \subset X \times Y$ be closed and $\U$ be an open cover of $C$. Consider $C_0=\{x \in X \colon (x, 0) \in C\}$
and  $C_1=\{x \in X \colon (x, 1) \in C\}$. Then both $C_0$ and $C_1$ are closed in $X$ since $C$ is closed. Moreover,
$C \subset (C_0\times\{0\})  \cup (C_1 \times \{1\})$.  Set $\U_0=\{U \in \U \colon U \cap (C_0 \times \{0\}) \not = \emptyset\}$.
 Clearly, $\U_0$ is an open cover of $C_0 \times \{0\}$.
 Since  $C_0$ is closed and $X$ is quasi-\Lind  \ we find a countable subfamily, say $\U'$, of $\U_0$ such
 that $C_0 \times \{0\} \subset \bar{\bigcup \U'}$ (here the identification of  $C_0$ and $C_0 \times\{0\}$ is obvious).
 Likewise,  we  deal with $C_1$ and find a countable subfamily,  say $\U^*$, of $\U$ such
 that $C_1 \subset \bar{\bigcup \U^*}$. Then  $\hat{\U}=\U' \cup \U^*$ is as required, i.e.
  $C \subset \bar{\bigcup \hat{\U}}$.
\end{proof}

\begin{ackn} 
The author is indebted to the referee for their thorough reading of the paper, and for their very helpful remarks. 

In the initially submitted version of the paper, the unit interval [0,1] was mistyped as \{0,1\} in the second open question. It was thus pointed out by the referee that that Lemma \ref{refprop} holds.  Here, we provide their proof, which they generously offered. 
\end{ackn}

\comments{
The following example provides a negative answer to the question. 
Is is based on an example given in \cite{BGW} showing that there is a Hausdorff, first countable, weakly \Lin space with arbitrarily large cardinality. 

\begin{xmpl}
There is a topological space $X$ which is a product of a weakly \Lin and a compact space and is not quasi-\Lind.
\end{xmpl}

\begin{proof}
Let $Z$ be the space constructed in \cite{BGW}.  
Then $X=Z\times[0,1]$ with the usual product topology. 
We shall show that $X$ is not quasi-\Lind. 
Let us first recall the construction of $Z$:
We will denote the irrational numbers by $\I$. 
Let $\kappa$ be an arbitrary uncountable cardinal number, $A\subset\I$ be countable and dense in $\I$ (and hence also dense in $\R$). 
Let $Z=(\Q\times\ka)\cup A$.
Note $A\cap \Q=\emptyset$.
For every $(q,\al)\in\Q\times\ka$ define a neighborhood base
\begin{displaymath}
U_n(q,\al)=\{(r,\al):r\in\Q\wedge|r-q|<\n   \},\ n\in\N.
\end{displaymath}
For every $a\in A$, define a neighborhood base
\begin{displaymath}
U_n(a)=\{b\in A:|b-a|<\n\}\cup\{(q,\al):\al<\ka\wedge|q-a|<\n\}, \ n\in\N.
\end{displaymath}

Let $\tau$ be the topology generated by $\{U_n(q,\al):(q,\al)\in\Q\times\ka\}\cup \{ U_n(a):a\in A, n\in\N\}$.
In \cite{BGW} it was proved that $Z$ is a Hausdorff, weakly \Lin not \Lin space. 
We prove in addition that $Z$ is not quasi-\Lind. 

\end{proof}
}

\bibliographystyle{alpha}
\bibliography{Bibliography}

\begin{flushright}
Petra Staynova

Pembroke College

Oxford University

 Oxford, OX1 1DW

United Kingdom

e-mail: \url{petra.staynova@pmb.ox.ac.uk}
\end{flushright}
\end{document}